\newtheorem {theorem} {Theorem}
\newtheorem {proposition}{Proposition}
\newtheorem {corollary}{Corollary}
\newtheorem {lemma}{Lemma}
\newtheorem {remark}{Remark}
\newtheorem {example} {Example}
\title[Jacobian conjecture in $\mathbb R^2$]
{Jacobian conjecture in $\mathbb R^2$}
\author[
X. Zhang]
{
Xiang Zhang }
\address{School of Mathematical Sciences and MOE--LSC, Shanghai Jiao Tong University, Shanghai 200240, People's Republic of China} \email{xzhang@sjtu.edu.cn}
\subjclass[2010]{14R15; 34C08; 14H70; 34C14.}
\keywords{Jacobian conjecture; polynomial map; injective; Hamiltonian vector fields; commuting vector fields.}
\begin{document}

\begin{abstract}
Jacobian conjecture states that if $F:\ \mathbb C^n(\mathbb R^n)\rightarrow \mathbb C^n(\mathbb R^n)$ is a polynomial map such that the Jacobian of $F$ is a nonzero constant, then $F$ is injective.

This conjecture is still open for all $n\ge 2$, and for both $\mathbb C^n$ and $\mathbb R^n$.
Here we provide a positive answer to the Jacobian conjecture in $\mathbb R^2$ via the tools from the {theory of} dynamical systems.
\end{abstract}

\maketitle

{
\bf \noindent Remark on this revised version:

\small After the version arXiv:2011.12701 was published, I received emails from several specialists, who wanted to see the detail proofs on some arguments, which were originally only a sentence statement for each one. This version contains all detail proofs on those arguments, and it is different from arXiv:2011.12701 in the following aspects:
\begin{itemize}
\item Add Lemma 1 and its detail proof in pages 6 to 9. It was originally only a sentence statement in page 5.

\item Add Proposition 1 and its proof in pages 14 and 15, which is used in the proof of Lemma 1.

\item Add a detail proof on the argument that $\Omega_*:=\bigcup\limits_{t=0}\limits^{\beta_{p_*}}\phi_t(\Upsilon_*)$ covers one of the half spaces limited by $\Upsilon_*$ in page 16, which was originally only a sentence statement on page 12.

\item Modify some print and grama mistakes, and revise a few presentations.
\end{itemize}
}

Let $(f,g):\ \mathbb R^2(\mathbb C^2)\rightarrow \mathbb R^2(\mathbb C^2)$  be a polynomial map. We denote by $J(f,g)$ the Jacobian matrix of the map $(f,g)$, and by $D(f,g)$ the Jacobian of $(f,g)$, i.e. $D(f,g)=\det J(f,g)$. In what follows $\mathbb R^*=\mathbb R\setminus\{0\}$ and $\mathbb C^*=\mathbb C\setminus\{0\}$.

The classical Jacobian conjecture states that if the Jacobian $D(f,g)=1$, then $F$ is injective. This conjecture was first posed as a question by Keller \cite{Ke39} in 1939 { on polynomial maps with integer coefficients, and had been taken the nowadays name and widely publicized by Shreeram Abhyankar}. For more information on the history of this conjecture, see e.g. the survey paper{s and books} \cite{BCW82,Es00,Wr81}. Nowadays,  the Jacobian conjecture is formulated in the next form (see e.g. Bass et al \cite{BCW82} and { van den Essen} \cite[pages XV and 82]{Es00}
).
\smallskip

\noindent{\bf Jacobian conjecture.}
{\it
If $F: \mathbb R^n(\mathbb C^n)\rightarrow \mathbb R^n(\mathbb C^n)$ is a polynomial map such that the Jacobian $DF\in\mathbb R^*(\mathbb C^*)$, then $F$ is injective.
}
\smallskip


Associated to the Jacobian conjecture, Randall \cite{Ra83} in 1983 posed the so called real Jacobian conjecture.
\smallskip

\noindent{\bf Real Jacobian conjecture.}
{\it
If $F: \mathbb R^2\rightarrow \mathbb R^2$ is a polynomial map such that the Jacobian $DF$ of $F$ does not vanish, then $F$ is injective.
}
\smallskip

This conjecture is not correct in general, as illustrated by Pinchuck \cite{Pi94} in 1994, which we will recall it again later on.

A general formulation in $\mathbb C^n$ of the real Jacobian conjecture was posed { by Smale \cite{Sm98} in 1998 as his 16th problem on a list of 18 open mathematical problems}. For distinguishing it from the Jacobian conjecture mentioned above and according to Pinchuck \cite{Pi94}, we call it Strong Jacobian conjecture.
\smallskip

\noindent{\bf Strong Jacobian conjecture.} {\it
If $F: \mathbb C^n \rightarrow \mathbb C^n $ is a polynomial map such that the Jacobian $DF$ of $F$ does not vanish, then $F$ is injective.
}
\smallskip

{We note that in the strong Jacobian conjecture in $\mathbb C^n$ but not in $\mathbb R^n$, the Jacobian $DF$ must be a nonzero constant.}

For the polynomial maps, there is a natural connection between injective and surjective. Bia{\l}ynicki-Birula and   Rosenlicht \cite{BR62} in 1962 proved the next result (see also Rudin \cite{Ru95} in 1995).
\smallskip

\noindent{\bf Bia{\l}ynicki-Birula and Rosenlicht Theorem.} {\it If $F:\ \mathbb K^n\rightarrow \mathbb K^n$ is an injective polynomial map, with $\mathbb K$ an algebraically closed field of characteristic zero, then $F$ is a polynomial automorphism from $\mathbb K^n$ to itself.}
\smallskip

Pinchuck \cite{Pi94} in 1994 constructed a counterexample to the real Jacobian conjecture, which is a polynomial map in $\mathbb R^2$ with nonvanishing Jacobian, and it is not injective. 
As mentioned by the author, one of the components of the constructed map has its level curves containing more than one branches.

According to the Pinchuck's example, in order that the real Jacobian conjecture holds, there needs some additional conditions. Along this direction, there appeared some additional sufficient conditions ensuring the real Jacobian conjecture holds. See for instance
\cite{BD10,BGL16,BL15,BO16,CGM96,FGR04,GG21, IL19,Sa93,Sa98}, parts of which were proceeded via tools from the theory of dynamical systems on the characterization of global centers for real planar polynomial vector fields. But, at the moment there is no a necessary and sufficient condition for which the real Jacobian conjecture holds.

{On both Jacobian and strong Jacobian conjectures, there are many partial results, the relations of Jacobian conjecture with other conjectures, and the relations with other subjects}, see e.g. the survey papers by Bass et al \cite{BCW82} in 1982 and {van den Essen} \cite{Es00} in 2000, and the recent publications \cite{ACH19, BBRY14, Ca14, Ca141,DEZ17, DP18, EL15, FMV14, Fr14, GST16, JZ17, LBP17, Mi19, Pa14, PS18, St16, St19, St18, VGG17,YB13,YB14}. On the history of the Jacobian and strong Jacobian conjectures we should also mention the papers by Abhyankar and Moh \cite{AM75} in 1975 for two dimensional case, by
Wang \cite{Wa80} in 1980 for the map $F$ with degrees no more than $2$, by Bass et al \cite{BCW82} in 1982, Yagzhev \cite{Ya80} in 1980 and Dru\.{z}kowski \cite{Dr83} in 1983 via reduction of degrees of the polynomial maps, and by Hubbers \cite{Hu94} on cubic maps in dimension $4$.

In any case, as our knowledge, the Jacobian and strong Jacobian conjectures are in general open for all $n\ge 2$, see {for instance, van den Essen} \cite[Section 10.2]{Es00} in 2000, {Sabatini \cite[lines 1-3 of page 160]{Sa01} in 2001}, the recent paper \cite[paragraph 1 of page 5251]{BGL16} in 2016, and also the item Jacobian conjecture in Wikipedia.

Here we provide a positive answer to the Jacobian conjecture in $\mathbb R^2$, and its proof will be down by using the tools from the { theory of} dynamical systems related with the commuting Hamiltonian vector fields and the Poincar\'e compactification.

\begin{theorem}\label{tj}
If $F: \mathbb R^2\rightarrow \mathbb R^2$ is a polynomial map such that the Jacobian $DF\in\mathbb R^*$, then $F$ is injective.
\end{theorem}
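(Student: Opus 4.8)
The plan is to reduce the global injectivity of $F=(f,g)$ to a connectedness statement about the fibers, and then to prove that connectedness by exploiting the fact that constancy of the Jacobian forces two associated Hamiltonian vector fields to commute. First I would record the immediate consequences of $D(f,g)=c\in\mathbb{R}^*$: since $f_xg_y-f_yg_x=c\neq0$, neither $\nabla f$ nor $\nabla g$ vanishes, so $F$ is everywhere a local diffeomorphism and every level set $\{f=a\}$, $\{g=b\}$ is a regular $1$-manifold. Consider the Hamiltonian vector fields $X_f=-f_y\,\partial_x+f_x\,\partial_y$ and $X_g=-g_y\,\partial_x+g_x\,\partial_y$. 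Along the flow of $X_f$ one has $\dot f=0$ and $\dot g=f_xg_y-f_yg_x=c$, so $f$ is constant while $g$ is strictly monotone; hence on each connected component of $\{f=a\}$ the function $g$ is injective. A maximum/minimum argument rules out compact components (at an extremum of $g$ along a compact component $\nabla g$ would be parallel to $\nabla f$, forcing $D(f,g)=0$), so each component is diffeomorphic to $\mathbb{R}$. Consequently, if every fiber $\{f=a\}$ is connected, then $F(p)=F(q)=(a,b)$ places both points on a single component on which $g$ is injective, so $p=q$. The entire difficulty is thereby transferred to proving connectedness of the fibers — precisely the property that fails in Pinchuck's non-constant-Jacobian example.

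The decisive use of constancy is the Poisson-bracket identity $[X_f,X_g]=X_{\{f,g\}}$ (up to sign, depending on conventions), where $\{f,g\}=f_xg_y-f_yg_x=D(f,g)$. Since $D(f,g)=c$ is constant we get $X_{\{f,g\}}=0$, so the two Hamiltonian vector fields \emph{commute}; it is exactly here that a merely nonvanishing Jacobian would not suffice. Commuting flows generate a local $\mathbb{R}^2$-action which $F$ conjugates to the translation action on the target, because $F$ pushes $X_f$ forward to $c\,\partial_v$ and $X_g$ to $-c\,\partial_u$ in coordinates $(u,v)=(f,g)$. I would use this conjugacy systematically to convert statements about trajectories in the source into statements about straight lines in the target.

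With this in hand I would run a sweeping argument. Fix a connected component $\Upsilon$ of a level curve of $g$; it is an orbit of $X_g$, diffeomorphic to $\mathbb{R}$, and $f$ is strictly monotone along it. Flow $\Upsilon$ by $\phi_t$, the flow of $X_f$: since $\phi_t$ preserves $f$ on each orbit and moves $g$ strictly monotonically, the images $\phi_t(\Upsilon)$ foliate an open region $\Omega$, on which the commuting structure makes $F|_\Omega$ a diffeomorphism onto a product region in the target. The core geometric claim is that $\Omega$ covers an entire half-plane bounded by $\Upsilon$; flowing in the opposite time-direction covers the complementary half-plane, so the union is all of $\mathbb{R}^2$ and $F$ is a diffeomorphism onto its image, hence injective. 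Carrying this out rigorously requires knowing that the relevant trajectories are complete, that $f$ is onto along $\Upsilon$ and each fiber has exactly two ends tending to infinity, and that the sweep leaves no gaps and creates no overlaps.

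That last point is where I expect the main obstacle to lie, and where I would invoke the Poincar\'e compactification. The fields $X_f,X_g$ are polynomial but need not be complete, and in principle a fiber could escape to infinity while $g$ stays bounded; ruling this out and controlling how orbits approach the line at infinity is the delicate part. I would extend $X_f$ to the Poincar\'e sphere, analyze its singular points on the equator, and use this qualitative picture to prove that every fiber is unbounded with two well-behaved ends, that $g$ is onto each component of $\{f=a\}$, and that $\Omega$ genuinely fills a half-plane limited by $\Upsilon$. This control of the dynamics at infinity — the half-plane covering property together with the completeness and monotonicity behavior of the fibers — is the technical heart of the proof, and once it is in place the remaining steps are a matter of assembling the pieces.
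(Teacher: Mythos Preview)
Your proposal follows essentially the same route as the paper: commuting Hamiltonian vector fields $H_f,H_g$ from the constant Jacobian, monotonicity of $g$ along $H_f$-orbits to reduce injectivity to connectedness of the fibers, and a sweeping argument controlled via the Poincar\'e compactification. The step you single out as the main obstacle---that the sweep $\bigcup_t\phi_t(\Upsilon)$ genuinely fills a half-plane---is precisely what the paper isolates as its technical core in Lemma~\ref{lep} (stability of the endpoints of $\psi_s(\mathcal S_1)$ on the equator) together with Proposition~\ref{r4} (completeness of the flows in $\mathbb R^2$).
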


\begin{proof}
Set $F(x,y)=(f(x,y),g(x,y))\in (\mathbb R[x,y])^2$. Then $DF=D(f,g):=f_xg_y-f_yg_x$.
Let $H_f=(-f_y,f_x)^T$ and $H_g=(-g_y,g_x)^T$ be respectively the Hamiltonian vector fields associated to $f$ and $g$, where $T$ represents the transpose of a matrix. Denote by $[H_f,H_g]$ the Lie bracket of the two Hamiltonian vector fields $H_f$ and $H_g$. Recall that the Lie bracket of two smooth vector fields $\mathcal X$ and $\mathcal Y$ is by definition $[\mathcal X, \ \mathcal Y]=(J\mathcal Y)\mathcal X-(J\mathcal X)\mathcal Y$, where $J\mathcal X$ and $J\mathcal Y$ are the Jacobian matrices of the vector fields $\mathcal X$ and $\mathcal Y$, respectively. For more information on Lie brackets, see e.g. \cite{AM78,Ol86}.

{The next seven claims will complete the proof of Theorem \ref{tj}.}

\noindent{\bf Claim }1. {\it $D(f,g)\in\mathbb R$ if and only if $[H_f,H_g]\equiv 0$.}

This statement can be found, for example, in \cite[Lemma 2]{Sa01} and \cite[Remark 3.2]{CL19}. For completeness, we present its proof here.

\begin{proof}[Proof of Claim 1]
Direct calculations show that
\begin{align*}
D_x&=-f_yg_{x^2}+f_xg_{yx}+f_{x^2}g_y-f_{yx}g_x,\\
D_y&=-f_yg_{xy}+f_xg_{y^2}+f_{xy}g_y-f_{y^2}g_x,
\end{align*}
where $D_x$ represents the partial derivative of $D(f,g)$ with respect to $x$. In addition, some computations verify that
\[
[H_f,H_g]=(JH_g)H_f-(JH_F)H_g=(-D_y,D_x)^T=H_D.
\]
If follows that $[H_f,H_g]\equiv 0$ if and only if $D_x\equiv 0$ and $D_y\equiv 0$, and if and only if $D(f,g)$ is a constant. The claim follows. 
\end{proof}

We remark that for a given polynomial or an analytic vector field $\mathcal X$, the set of vector fields $\mathcal Y$ satisfying $[\mathcal X,\ \mathcal Y]=0$ is called {\it centralizer} of $\mathcal X$. Recently Cerveau and Lins Neto \cite{CL19} and Walcher and Zhang \cite{KWZ20} characterized the structure and dimension of the centralizer.

\noindent{\bf Claim }2. {\it If $D(f,g)\in\mathbb R^*$, then the Hamiltonian vector fields $H_f$ and $H_g$ both have no singularities and have no periodic orbits. Consequently, all orbits of $H_f$ and $H_g$ have their positive and negative limits going to infinity.}

\begin{proof}[Proof of Claim 2] Indeed, if the Hamiltonian vector field $H_f$ has a singularity at $P_0$, then $(-f_y(P_0),\ f_x(P_0))=0$. So the gradient vector field $\nabla f:=(f_x,\ f_y)$ vanishes at $P_0$. It forces that $D(f,g)(P_0)=0$, a contradiction {with the assumption of the claim}.

If the polynomial Hamiltonian vector field $H_f$ has a periodic orbit, saying $\Gamma_0$, then it will have a period annulus containing $\Gamma_0$ in its interior. { By Ye \cite[Theorem 1.6]{Ye86} the outer and inner boundaries of a maximal period annulus of an analytic vector fields both contain singularities (those on the outer boundary may be at infinity).
Or using the well known fact that in the region limited by a periodic orbit there is at least one singularity of the planar vector field, we will be in a contradiction with the last argument that $H_f$ cannot have singularities in the finite plane.}

As a consequence of these last two facts, all orbits of $H_f$ are regular and have their positive and negative limits both at infinity. The claim holds. \end{proof}


\smallskip

\noindent{\bf Claim }3. {\it If $D(f,g)\in\mathbb R^*$, then the Hamiltonian vector fields $H_f$ and $H_g$ intersect transversally everywhere. Moreover, the flow of $H_g$ maps any orbit of $H_f$ to a different one and preserves the time.}

\begin{proof}[Proof of Claim 3] Indeed, the first argument follows from the fact that
\[
\det(H_f, H_g)
=\det J(f,g)=D(f,g)\ne 0.
\]
Claim 1 {has shown} that $[H_f,H_g]\equiv 0$. Then the second argument is a well known fact in the world of dynamical systems, and is from the {commutation} of the flows of two commuting vector fields $H_f$ and $H_g$ and their transversality everywhere. For more information on dynamical properties of commuting vector fields, see e.g. {\cite[Theorem 1.34]{Ol86}}, or \cite{Sa97}, or \cite{AM78}. The claim follows. \end{proof}

\noindent{\bf Claim }4. {\it For the polynomial $f\in\mathbb R[x,y]$, its associated Hamiltonian vector field $H_f$ has finitely many singularities at the infinity, i.e. on the equator of the Poincar\'e disc.}

Poincar\'e compactification is useful in the study of global dynamics, including the infinity, of polynomial vector fields in $\mathbb R^n$. For more information on Poincar\'e compactification, see e.g. \cite{CL90}, \cite[Chapter 5]{DLA06}.

\begin{proof}[Proof of Claim 4] It is well known \cite[Chapter 5]{DLA06} that the infinity of a polynomial vector field of degree $m$ either contains at most $m+1$ singularities on the equator of the Poincar\'e disc or fulfils singularities on the equator of the Poincar\'e disc, where each pair of diametral points on the equator is numerated as one point.
Let a polynomial vector field of degree $m$ be of the form $\mathcal X_P:=\left(\sum\limits_{j=0}\limits^m p_j(x,y),\sum\limits_{j=0}\limits^m q_j(x,y)\right)^T$ with $p_j,\ q_j$ being homogeneous polynomials of degree $j$ or naught, and with $p_m$ and $q_m$ not both identically zero.
By \cite{CL91, SZ18, Zh11,Zh03} in order that the infinity of $\mathcal X_P$ on the Poincar\'e disc fulfils singularities, it is necessary that $(p_m(x,y),q_m(x,y))$ is of the form $(xh_{m-1}(x,y),yh_{m-1}(x,y))$, where $h_{m-1}$ is a homogeneous polynomial of degree $m-1$. But it is not possible for the polynomial Hamiltonian vector field $H_f$. In fact, suppose that $f$ is of degree $\ell$, and { its highest order homogenous part}  $f_\ell(x,y)=\sum\limits_{j=0}\limits^{\ell}\alpha_jx^jy^{\ell-j}$, $\alpha_j\in\mathbb R$. Then
the highest order term of the Hamiltonian vector field $H_f$ is $H_{f_\ell}:=(-\partial_yf_\ell,\ \partial_xf_\ell)=\left(-\sum\limits_{j=0}\limits^{\ell}\alpha_j(\ell-j)x^jy^{\ell-j-1},\ \sum\limits_{j=0}\limits^{\ell}\alpha_jjx^{j-1}y^{\ell-j}\right)$, where $\partial_yf_\ell$ is the partial derivative of $f_\ell$ with respect to $y$. In order for its first (resp. second) component of $H_{f_\ell}$ to have a factor $x$ (resp. $y$), one has $\alpha_0=\alpha_\ell=0$.  So  $H_{f_\ell}=\left(x(-\sum\limits_{j=1}\limits^{\ell-1}(\ell-j)\alpha_jx^{j-1}y^{\ell-j-1}), \ y(\sum\limits_{j=1}\limits^{\ell-1} j \alpha_jx^{j-1}y^{\ell-j-1})\right)$. Clearly, $H_{f_\ell}$ is not of the form $(xh_{\ell-2},yh_{\ell-2})$ with $h_{\ell-2}$ being homogeneous of degree $\ell-2$. This proves that the highest order term of the Hamiltonian vector field $H_f$ does not have the form such that its infinity fulfils singularities. Consequently, the claim holds. \end{proof}

{We remark that in the following Proposition \ref{r5}, we will further prove that the singularities at the infinity are uniquely determined by the real linear factors of the highest order homogeneous parts of $f$.}

In order to prove the {injectivity} of the polynomial map $F=(f,g)$ in $\mathbb R^2$, it is equivalent to prove $F(p_0)\ne F(q_0)$ for all $p_0,q_0\in\mathbb R^2$ and $p_0\ne q_0$, i.e. $f(p_0)\ne f(q_0)$ or $g(p_0)\ne g(q_0)$.

For $p,q\in\mathbb R^2$ satisfying $p\ne q$, if $f(p)\ne f(q)$, we are down.
Assume that there exist some $p_0,q_0\in\mathbb R^2$ with $p_0\ne q_0$, such that $f(p_0)=f(q_0)$. Set $u_0:=f(p_0)=f(q_0)$ and $\mathcal S:=\{r\in\mathbb R^2|\ f(r)=u_0\}$.

Let $\phi_t$ and $\psi_s$ be the flows of the Hamiltonian vector fields $H_f$ and $H_g$, respectively.

\noindent{\bf Claim }5. {\it  If $D(f,g)\in\mathbb R^*$, then any connected branch of $\mathcal S$ sweeps the full plane $\mathbb R^2$ under the action of the flow $\psi_s$ of $H_g$.}


\begin{proof}[Proof of Claim 5] Let $\mathcal S_1$ be one of the connected branches of $\mathcal S$ and assume without loss of generality that $p_0$ is located on $\mathcal S_1$. Set $v_0=g(p_0)$ and
$\mathcal R=\{r\in\mathbb R^2|\ g(r)=v_0\}$. Let $\mathcal R_1$ be the connected branch of $\mathcal R$ with $p_0$ located on it.

Now we work on the Poincar\'e compactification for planar polynomial vector fields, see e.g. \cite{CL90},
\cite[Chapter 5]{DLA06}. Recall from Claim 2 that $\mathcal R$ and $\mathcal S$ are nonsingular curves and are not closed in the finite plane. Then $\mathcal R_1$ and $\mathcal S_1$ all have their {endpoints} at the infinity.
Thus, $\mathcal S_1$ separates the Poincar\'e disc in two disjoint regions, saying $D_{\mathcal S_1}^+$ and $D_{\mathcal S_1}^-$, and its two endpoints at the infinity are denoted by $I_{11}$ and $I_{12}$ (they maybe coincide, i.e. $I_{11}=I_{12}$), which are singularities of the Hamiltonian vector field $H_f$ at the infinity.

As it is well known, the flow of an analytic vector field on a compact space is complete, i.e. any of its solutions is defined on all $\mathbb R$ (see e.g. \cite{DLA06}). So the Poincar\'e compactifications of the Hamiltonian vector fields $H_f$ and $H_g$ have their flows complete. That is, the flows $\phi_t$ of $H_f$ and $\psi_s$ of $H_g$ are complete on the Poincar\'e disc. Since we are working in Poincar\'e compactification, in what follows all the proofs related to the flows will be processed in the full $\mathbb R$. {In fact, we will show in Proposition \ref{r4} that in our setting the flows of the Hamiltonian vector fields $H_f$ and $H_g$ are also complete in $\mathbb R^2$.}

By Claim 3 the vector fields $H_f$ and $H_g$ are transversal everywhere in the finite plane, and by Claim 1 one has $[H_f,H_g]\equiv 0$ and so the flows of $H_f$ and $H_g$ {commute}. It follows that under the action of the flow $\psi_s$ of $H_g$, the invariant curve $\mathcal S_1$ of $H_f$ moves towards either into $D_{\mathcal S_1}^+$ or into $D_{\mathcal S_1}^-$. For fixing notations and without loss of generality, we assume that $\psi_s(\mathcal S_1) \subset D_{\mathcal S_1}^+$ when $s>0$ and $\psi_s(\mathcal S_1) \subset D_{\mathcal S_1}^-$ when $s<0$. By { commutation} of the flows $\phi_t$ and $\psi_s$, it follows that for any fixed $s\in\mathbb R$, $\psi_s(\mathcal S_1)$ is an invariant curve of $H_f$, and for different values of $s$ the invariant curves $\psi_s(\mathcal S_1)$'s are different. Recall that invariant curves of a planar Hamiltonian vector field are all given by the level curves of its associated Hamiltonian function.
It forces that $\psi_s(\mathcal S_1)$ is a level curve of $f$, and so it is regular and has its two endpoints at the infinity.

Note that when the time $s$ continuously increases or decreases, the invariant curve $\psi_s(\mathcal S_1)$ of $H_f$ continuously runs forward from $\mathcal S_1$ inside $D_{\mathcal S_1}^+$  or moves backward from $\mathcal S_1$ inside $D_{\mathcal S_1}^-$.
{ On the endpoints of $\psi_s(\mathcal S_1)$ one has the next result.
\begin{lemma}\label{lep}
When $s$ continuously varies the two endpoints $($maybe coincide$)$ of $\psi_s(\mathcal S_1)$ on the Poincar\'e disc will keep the same as those of $\mathcal S_1$.
\end{lemma}
\begin{proof}
 { Recall from Claim 4 that the singularities at the infinity of $H_f$ on the Poincar\'e disc are finite, and the fact via commutation of $\phi_t$ and $\psi_s$ that $\psi_s(\mathcal S_1)$ is an orbit of the Hamiltonian vector field $H_f$ having its endpoints at the infinity for any fixed $s\in\mathbb R$ which belongs to the existence interval of the soluton. It follows that $f(\psi_s(\mathcal S_1))$ is a constant for each fixed $s$, and so $\psi_s(\mathcal S_1)$ is an invariant algebraic curve of $H_f$.
Some calculations (see for instance \eqref{er5}) show that for any $p\in \mathcal S_1$
\[
\frac{d(f(\psi_s(p)))}{ds}=-c, \quad s\in J_p,
\]
where $J_p$ is the existence interval of the solution passing $p$, and $c\ne 0$ is the constant such that $D(f(x,y),g(x,y))\equiv c$ on $\mathbb R^2$ by the assumption of the Jacobian conjecture. Then for any fixed $s\ne 0$ (maybe small) one has
\[
f(\psi_s(p))=f(p)-cs,
\]
and consequently
\begin{equation}\label{eif1}
f(\psi_s(\mathcal S_1))=f(\mathcal S_1)-cs,
\end{equation}
where we have used the fact that $\mathcal S_1=\psi_0(\mathcal S_1)\ni p$ and $\psi_s(\mathcal S_1)\ni \psi_s(p)$ are both orbits of the Hamiltonian vector field $H_f$.

If for any $s\ne 0$, the two orbits $\psi_s(\mathcal S_1)$ and $\mathcal S_1$ have the same endpoints at the infinity of the Poincar\'e disc, we are down.

If not, for any sufficiently small $s_0\ne 0$ (without loss of generality we consider $s_0>0$) there exists an $s_0^*$ with $s_0^*\in(0, s_0]$ such that $\psi_{s_0^*}(\mathcal S_1)$ and $\mathcal S_1$ have different endpoints. For simplifying notation, without loss of generality we can take $s_0^*=s_0$. See Figure \ref{F0} for an illustration, where $b_0$ is an any chosen and fixed point on $\mathcal S_1$. Of course, we can take without loss of generality that $I_{11}$ and $I_{21}$ are two consecutive singularities at the infinity on the equator of the Poincar\'e disc with $I_{11}$ being one of the endpoints of $\mathcal S_1$ and with $I_{21}$ being that of $\psi_{s_0}(\mathcal S_1)$. In the next proof we do not pay attention on where another endpoint of $\psi_{s_0}(\mathcal S_1)$ coincides with $I_{12}$ or not, where $I_{12}$ is another endpoint of $\mathcal S_1$ (maybe coincide with $I_{11}$).

\begin{figure}[ht]
\begin{center}
\includegraphics[width=7cm]{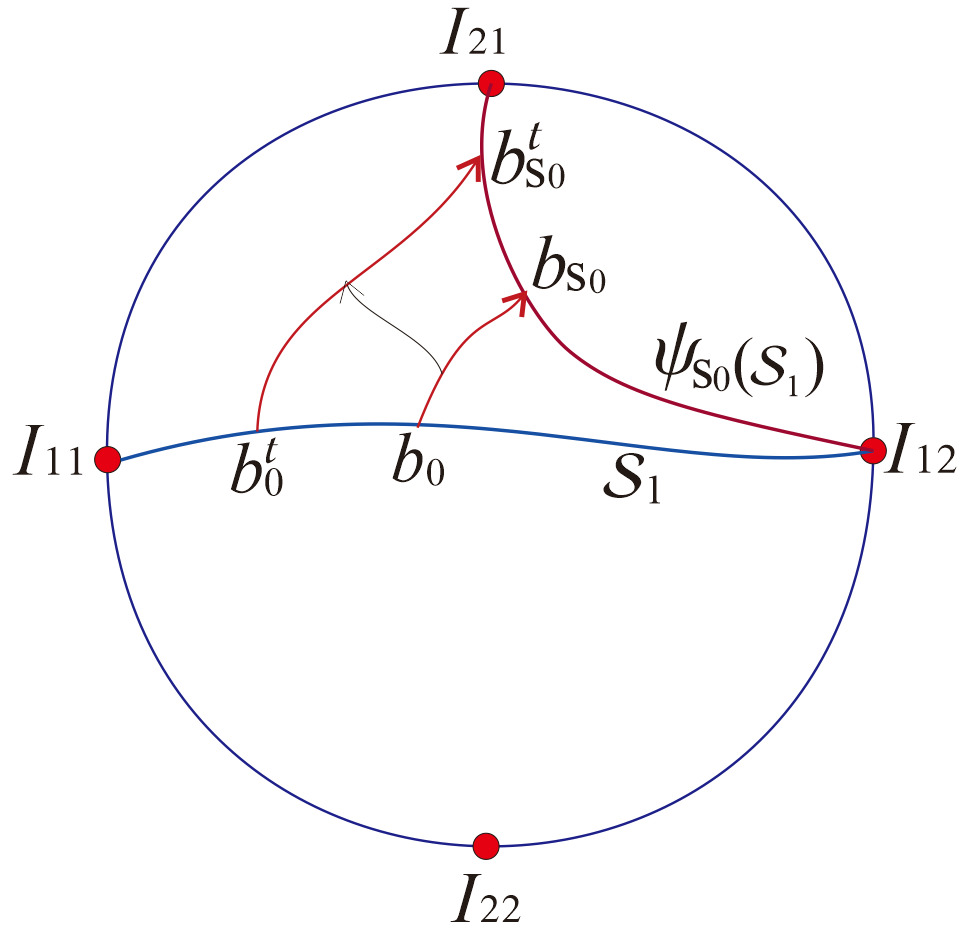}
\end{center}
\caption{Endpoints of $\mathcal S_1$ and $\psi_s(\mathcal S_1)$ at the infinity of the Poincar\'e disc by the contrary assumption} \label{F0}
\end{figure}

Now considering the flow $\phi_t$ acting on $b_0$, and assuming without loss of generality that when $t>0$, $\phi_t(b_0)$  moves towards $I_{11}$ along $\mathcal S_1$.
Denote by $b_0^t=\phi_t(b_0)$ and $b_{s_0}^t=\phi_t(b_{s_0})$. It follows from commutation of the flows $\phi_t$ and $\psi_s$ that $b_{s_0}^t=\phi_t(b_{s_0})$.
Again by commutation of $\phi_t$ and $\psi_s$ one has that the flow $\phi_t$ of $H_f$ pushes the orbit segment $\overrightarrow{b_0b_{s_0}}$ of $H_g$  forward to $\overrightarrow{b_0^tb_{s_0}^t}$, and it sweeps the full region $\Theta_{s_0}$ limited by the orbit segments $\overrightarrow{b_0I_{11}}$ and $\overrightarrow{b_{s_0}I_{21}}$ of $\phi_t$ and the orbit segment $\overrightarrow{b_0b_{s_0}}$ of $\psi_s$ when $t\rightarrow \infty$ (hereafter the over arrow denotes the orbit arc in the positive sense corresponding to the arrow).
Indeed, if not, there will be a point, saying $\xi_0$, in the interior of $\Theta_0$, and $\xi_0\in\lim\limits_{t\rightarrow\infty}\phi_t(\overrightarrow{b_0b_{s_0}})$.
Then by transversality of the Hamiltonian vector fields $H_f$ and $H_g$ in $\mathbb R^2$ and
 monotonicity of the orbit segment $\overrightarrow{b_0^tb_{s_0}^t}$ in $t$, it follows that the negative orbit of $\phi_t$ passing $\xi_0$ will be located in between the orbit segments $\overrightarrow{b_0I_{11}}$ and $\overrightarrow{b_{s_0}I_{21}}$ of $H_f$ and be back to $\overrightarrow{b_0b_{s_0}}$ at some point saying $\eta_0$.
This process can be realized due to commutation of the flows $\phi_t$ and $\psi_s$ and the extension theorem of solutions.
 Now the orbit segment $\overleftarrow{\xi_0\eta_0}$ is a part of a regular orbit of $H_f$ in the finite plane, and so the time from $\xi_0$ to $\eta_0$ under the negative action of the flow $\phi_t$ is finite. We are in contradiction with the choice of $\xi_0$. This verifies the argument that under the action of $\phi_t$ with $t\in[0,\infty)$, the orbit argument $\overrightarrow{b_0b_{s_0}}$ sweeps the full region $\Theta_{s_0}$.

From \eqref{eif1} and the above argument one gets that $f$ takes values in between $f(\mathcal S_1)$ and $f(\mathcal S_1)-cs_{s_0}$ in $\Theta_{s_0}$ with $s_0>0$ being arbitrarily small constant. But it is impossible because $f$ is a polynomial and $\Theta_{s_0}$ is an infinitely large half--open region. This impossibility can also be verified in the following way:
\begin{itemize}
\item Since $H_f$ has only finitely many singularities at the infinity, for simplifying notation and without loss of generality we can assume that the endpoints of the $y$--axis are not singularities of $H_f$ at the infinity (otherwise, we can take a suitable rotation to the coordinate system such that the hypothesis holds). Let $f$ be of degree $m$ and its highest order homogeneous part be $f_m(x,y)$. Set $f_m(x,y)=\ell(y-a_1x)^{k_1}\ldots (y-a_px)^{k_p}h_q(x,y)$ with $a_1<\ldots<a_p$, where $h_q$ is a homogeneous polynomial of degree $q$ which cannot be factorized in linear factors in $\mathbb R$, and $k_1+\ldots+k_p+q=m$,
where we have used the fact that there is no a singularity which is located at the endpoints of the $y$--axis. Then $a_1,\ldots,a_p$ uniquely determine the singularities of $H_f$ on the equator of the Poincar\'e disc in one to one correspondence.
For readers' convenience we provide a proof on this last argument in Proposition \ref{r5}.

\item Let $a_j$ and $a_{j+1}$ be respectively the slopes of $\mathcal S_1$ and $\psi_{s_0}(\mathcal S_1)$ at the infinity. Note from Proposition \ref{r5} that $a_j$ and $a_{j+1}$ are two different real zeros of $f_m(1,w)$.
    The previous proof verifies that in $\Theta_{s_0}$, $ {x^2+y^2}$ can take any sufficiently large value, and $y/x$ can take any value in between $a_j$ and $a_{j+1}$ for $ {x^2+y^2}$ arbitrarily large (in case that $a_j$ and $a_{j+1}$ have different sign, for instance $a_j<0$ and $a_{j+1}>0$, $y/x$ could undergo from $a_j$ to $-\infty$ and from $\infty$ to $a_{j+1}$). Dividing the two sides of \eqref{eif1} by $(\sqrt{x^2+y^2})^m$, we can write the equality \eqref{eif1} in the next form
    \begin{equation}\label{eif2}
    f_m\left(\frac{x}{\sqrt{x^2+y^2}},\frac{y}{\sqrt{x^2+y^2}}\right)=O\left(\frac{1}{\sqrt{x^2+y^2}}\right),
    \quad \begin{array}{c} (x,y)\in\Theta_{s_0},\\
      {x^2+y^2}\rightarrow \infty,
     \end{array}
    \end{equation}
    where the capital $O$ represents the same order infinitesimal quantity  as $({\sqrt{x^2+y^2}})^{-1}$ during the limit process.
The above proof also shows that there exists a passage, saying $\mathfrak J$, located in the interior of $\Theta_{s_0}$, which satisfies  $y/x\rightarrow k$ when $(x,y)\in\mathfrak J$ and $ x^2+y^2 \rightarrow\infty$, where $k$ is a constant belonging in between $a_j$ and $a_{j+1}$ but not equal to $a_j, \ a_{j+1}$. Taking the limit $x^2+y^2\rightarrow\infty$ along $\mathfrak J$ to the two sides of \eqref{eif2} yields
\begin{equation}\label{eif3}
\frac{(\pm  1)^m}{\sqrt{1+k^2}} f_m (1,k )=0,
\end{equation}
where $1^m$ and $(-1)^m$ depend on $x>0$ and $x<0$, respectively, in the limit process. But the equality \eqref{eif3} cannot hold because $f_m(1,k)\ne 0$. We are again in contradiction.
\end{itemize}

The above proofs ensure that the contrary assumption on $\mathcal S_1$ and $\psi_s(\mathcal S_1)$ having different endpoints at the infinity is absurd. As a result, $\psi_s(\mathcal S_1)$ for $s\ne 0$ must have the same slope at the infinity as that of $\mathcal S_1$, and so all $\psi_s(\mathcal S_1)$ have the same endpoints at the infinity on the Poincar\'e disc as those of $\mathcal S_1$.

It completes the proof of the lemma.
}
\end{proof}

We note that this lemma will play the key role in the remaining proof of this claim.

Now we consider the branch $\mathcal R_1$ of $\mathcal R$. As shown in the previous proof for $\mathcal S_1$, one has that }
the two endpoints (maybe coincide) of $\mathcal R_1$ are also at the infinity of $H_g$ on the Poincar\'e disc, we denote them by $J_{11}$ and $J_{12}$ (maybe $J_{11}=J_{12}$), which are singularities of the Hamiltonian vector field $H_g$ at the infinity. Then it follows from the transversality of the vector fields $H_f$ and $H_g$ that {the locations of $I_{11}$, $I_{12}$, $J_{11}$ and $J_{12}$ have four possibilities:
\begin{itemize}
 \item[$(a_1)$] $I_{11}\ne I_{12}$, $J_{11}\ne J_{12}$, and $\{J_{11}, J_{12}\}\cap\{I_{11}, I_{12}\}=\emptyset$. So $J_{11}$ and $J_{12}$ are located on the different sides of $\mathcal S_1$, saying $J_{11}\in D_{\mathcal S_1}^+$ and $J_{12}\in D_{\mathcal S_1}^-$. The simple map $(f,g)=(x, y)$ is in this situation.
 \item[$(a_2)$] $I_{11}\ne I_{12}$, $J_{11}\ne J_{12}$, and $\{J_{11}, J_{12}\}\cap\{I_{11}, I_{12}\}\ne\emptyset$. So $J_{11}$ and $J_{12}$ coincide with $I_{11}$ and $I_{12}$ at least at one of them, saying for example $J_{11}=I_{11}$.
     The map $(f,g)=(x, y-x^3)$ illustrates this case.
 \item[$(a_3)$] $I_{11}\ne I_{12}$ and $J_{11}=J_{12}$. Then one of $I_{11}$ and $I_{12}$ coincides with $J_{11}=J_{12}$. The map $(f,g)=(x, y-x^2)$ is in this case.
 \item[$(a_4)$] $I_{11}= I_{12}$ and $J_{11}\ne J_{12}$. This is also in the situation $(a_3)$.
 \item[$(a_5)$] $I_{11}= I_{12}$ and $J_{11}= J_{12}$. Then $I_{11}=I_{12}=J_{11}=J_{12}$. The map $(f,g)=(y-x^2, y+x-x^2)$ illustrates this case.
   \end{itemize}
Now we consider the actions of the flow $\psi_s$ on $\mathcal S_1$, and of the flow $\phi_t$ on $\mathcal R_1$. Fig.~\ref{Fd} illustrates these actions.

\begin{figure}[ht]
\begin{center}
\begin{tabular}{cc}
\psfrag{A}{$I_{11}$}\psfrag{B}{$I_{12}$}\psfrag{C}{$J_{11}$}\psfrag{D}{$J_{12}$}\psfrag{P}{$p_0$}
\psfrag{S}{$\mathcal S_{1}$}\psfrag{R}{{\color{purple}$\mathcal R_{1}$}}
\psfrag{E}{$\psi_{-s_2}(\mathcal S_{1})$}\psfrag{F}{$\psi_{-s_1}(\mathcal S_{1})$}
\psfrag{H}{$\psi_{s_1}(\mathcal S_{1})$}\psfrag{I}{$\psi_{s_2}(\mathcal S_{1})$}
\psfrag{U}{{\color{purple}$\phi_{t_1}(\mathcal R_{1})$}}\psfrag{V}{{\color{purple}$\phi_{t_2}(\mathcal R_{1})$}}
\psfrag{G}{$I_{11}=J_{11}$}\psfrag{J}{$I_{12}=J_{12}$}\psfrag{K}{$I_{11}=I_{12}=J_{12}$}\psfrag{L}{$I_{11}=I_{12}=J_{11}=J_{12}$}
\includegraphics[width=5cm]{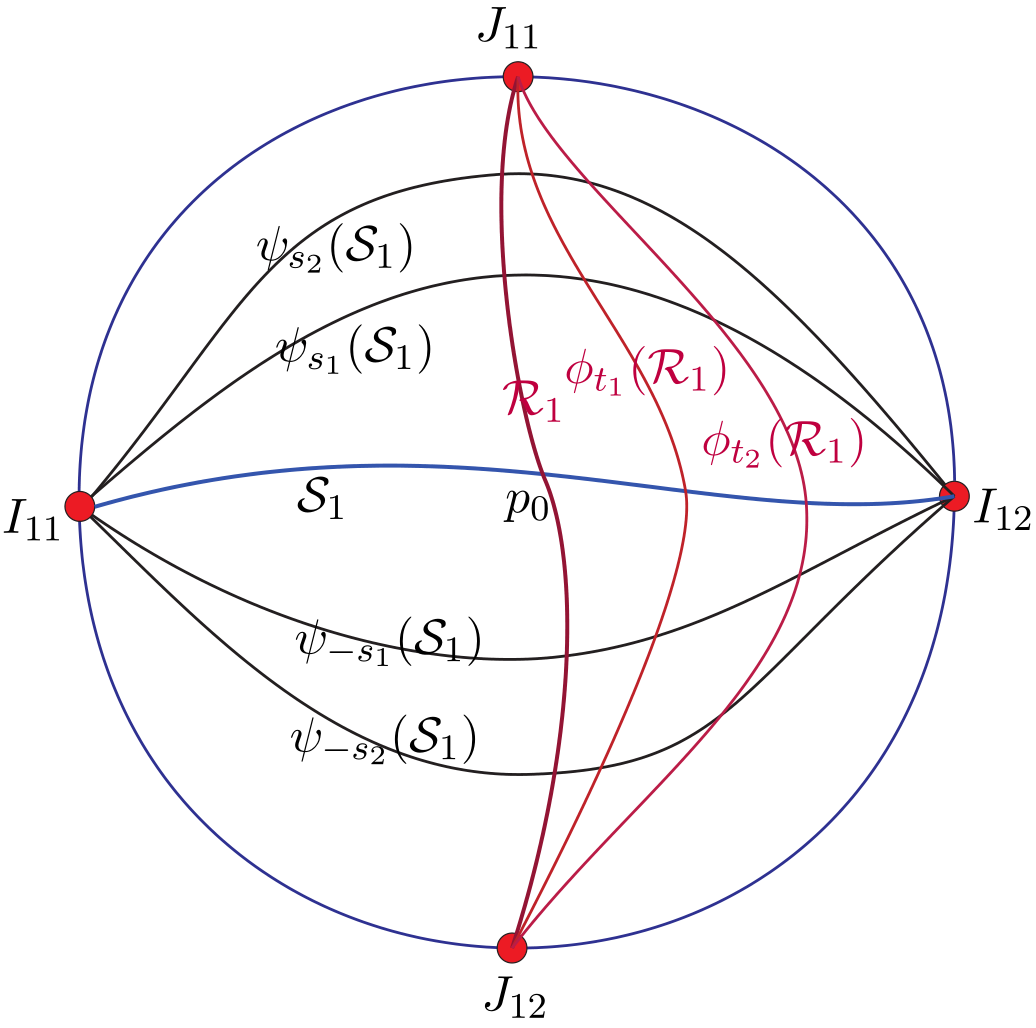} & \qquad \includegraphics[width=5cm]{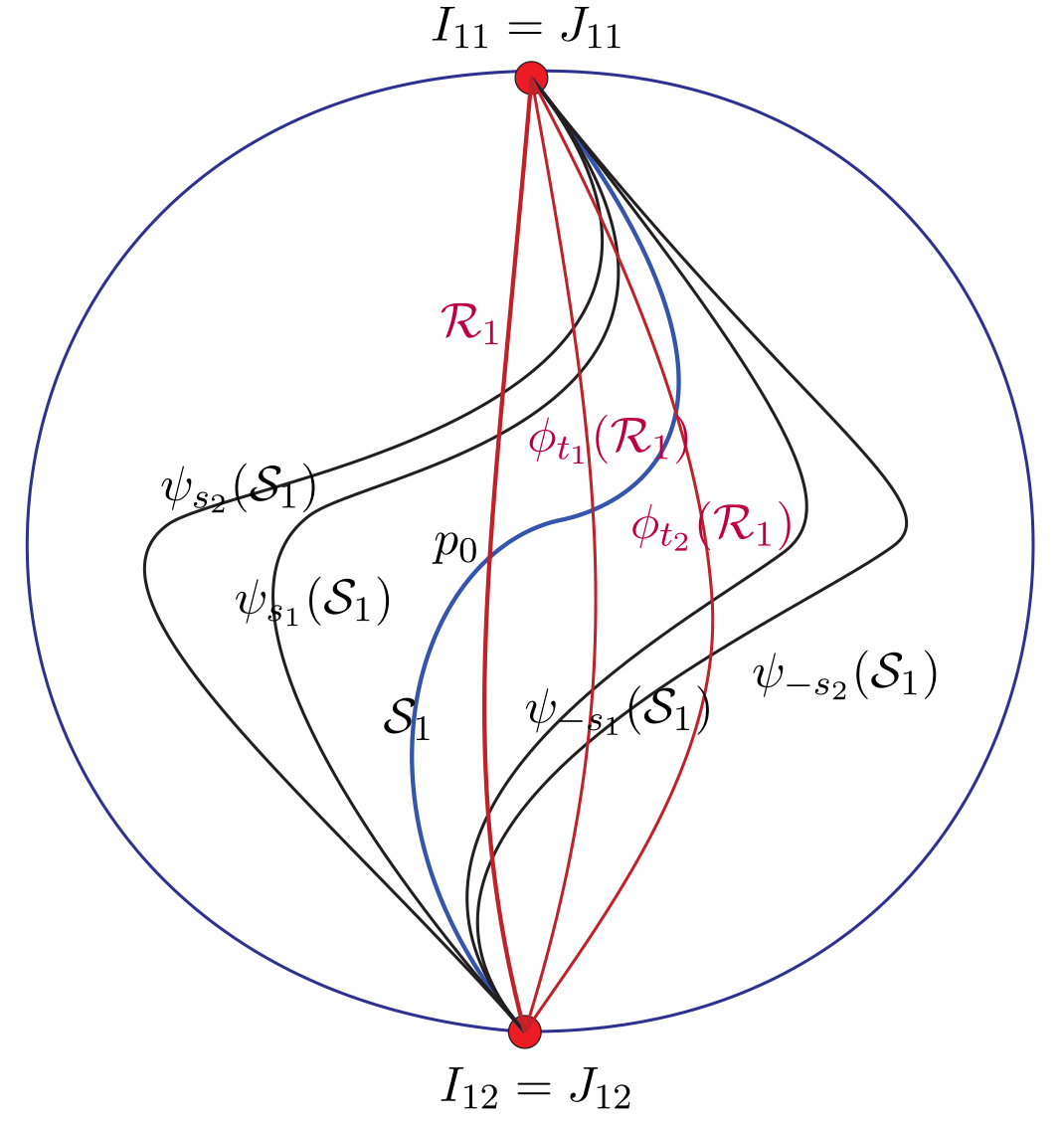} 
\\
$(a_1)$   & \  $(a_2)$ 
\\
\psfrag{A}{$I_{11}$}\psfrag{B}{$I_{12}$}\psfrag{C}{$J_{11}$}\psfrag{D}{$J_{12}$}\psfrag{P}{$p_0$}
\psfrag{S}{$\mathcal S_{1}$}\psfrag{R}{{\color{purple}$\mathcal R_{1}$}}
\psfrag{E}{$\psi_{-s_2}(\mathcal S_{1})$}\psfrag{F}{$\psi_{-s_1}(\mathcal S_{1})$}
\psfrag{H}{$\psi_{s_1}(\mathcal S_{1})$}\psfrag{I}{$\psi_{s_2}(\mathcal S_{1})$}
\psfrag{U}{{\color{purple}$\phi_{t_1}(\mathcal R_{1})$}}\psfrag{V}{{\color{purple}$\phi_{t_2}(\mathcal R_{1})$}}
\psfrag{G}{$I_{11}=J_{11}$}\psfrag{J}{$I_{12}=J_{12}$}\psfrag{K}{$I_{11}=I_{12}=J_{12}$}\psfrag{L}{$I_{11}=I_{12}=J_{11}=J_{12}$}
\includegraphics[width=5cm]{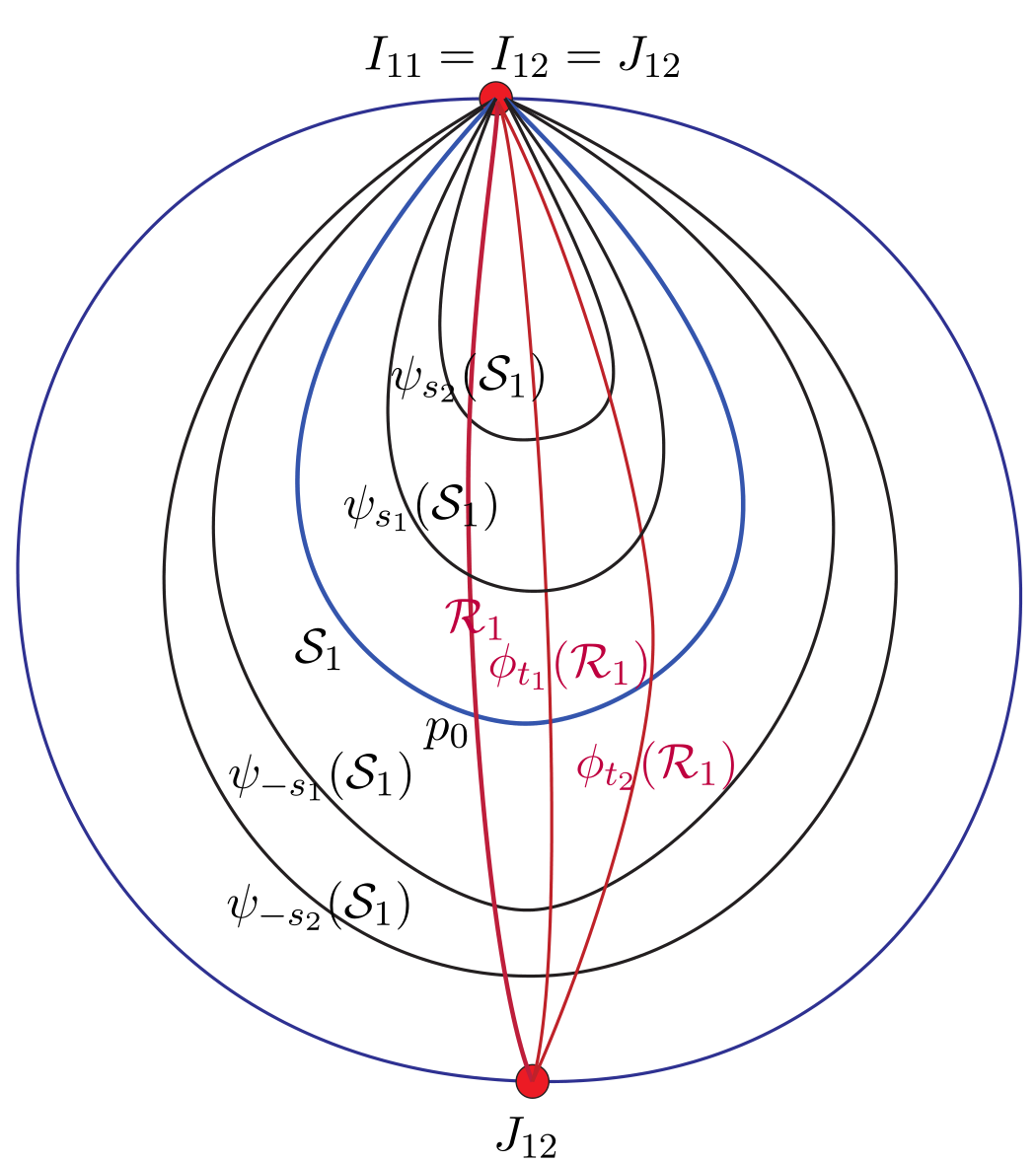}  &\qquad \includegraphics[width=5cm]{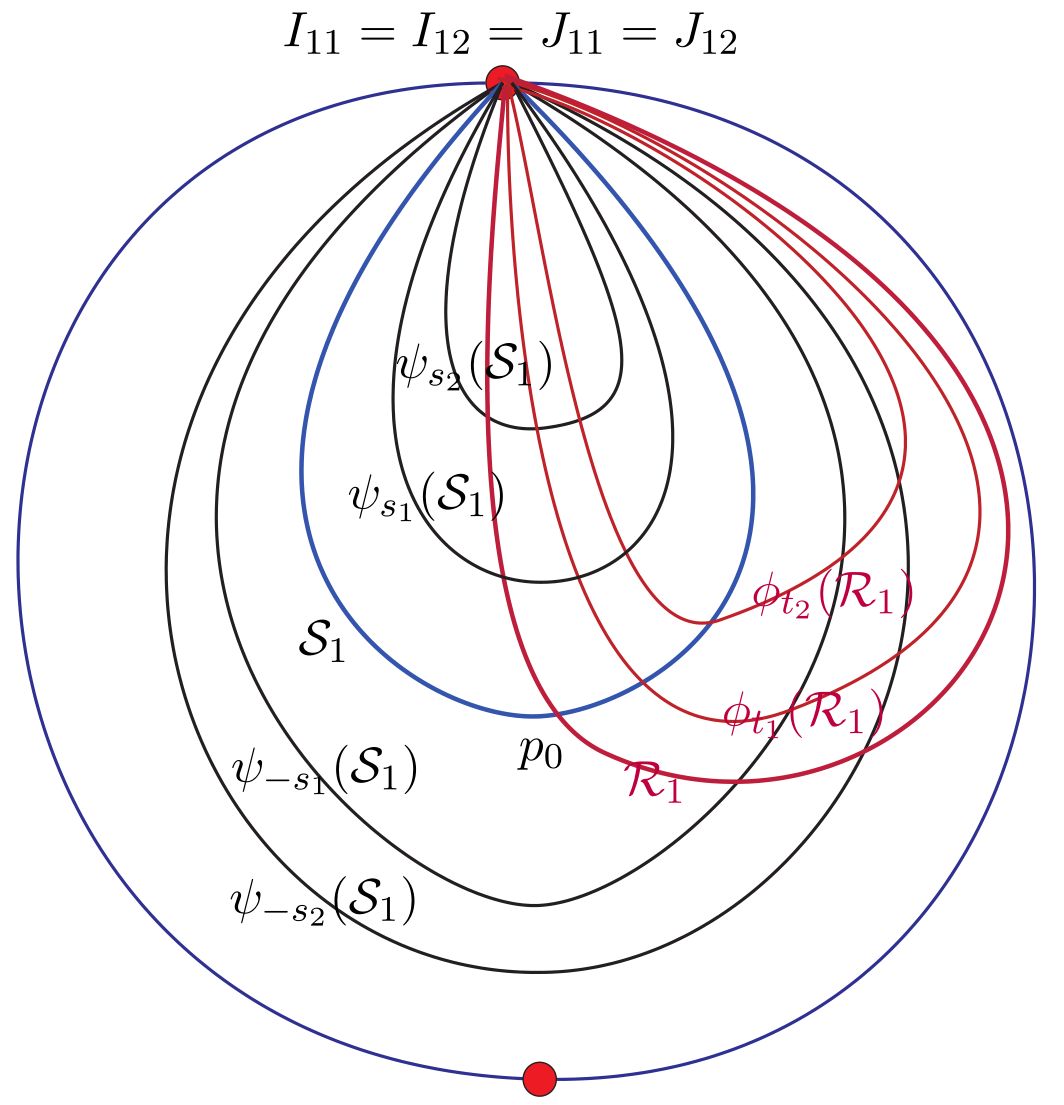}
\\
$(a_3)$  & \quad  $(a_4)$
\end{tabular}
\end{center}
\caption{Here we illustrate for $s_2>s_1>0$ and $t_2>t_1>0$} \label{Fd}
\end{figure}

}

When $s$ increases from $0$ to $\infty$, the flow $\psi_s$ acts on $p_0$ and pushes it along $\mathcal R_1$ to the infinity $J_{11}$, because $\mathcal R_1$ is invariant under the action of the flow $\psi_s$ of $H_g$. Denote by $\mathcal R_1^+$ the part of $\mathcal R_1$ from $p_0$ to $J_{11}$. Applying the similar arguments as presented in the previous paragraphs, one gets that for any fixed $t\in\mathbb R$, $\phi_t(\mathcal R_1^+)$ is positively invariant under the action of $\psi_s$, and it is a part of one level curve  of $g$. Furthermore, the same arguments as in the proof of Lemma \ref{lep} verify that for all $t\in\mathbb R$, $\phi_t(\mathcal R_1^+)$ have the same endpoint at $J_{11}$ at the infinity of $H_g$ on the Poincar\'e disc. This also implies that all $\phi_t(\mathcal R_1^+)$ {for $t\in\mathbb R$} approach the infinity of $\mathbb R^2$ in the same slope{, and so one of the endpoints of all $\phi_t(\mathcal R_1^+)$ has the same limit as a singularity of $H_g$ at the infinity}. As a consequence of these last facts {we can conclude that} $\bigcup\limits_{s=0}\limits^\infty\psi_s(\mathcal S_1)=D_{\mathcal S_1}^+$.

{The next is a proof to this last argument. By contrary, if not, set $\Omega_1:=\bigcup\limits_{s=0}\limits^\infty\psi_s(\mathcal S_1)$ and $\Omega_0:=D_{\mathcal S_1}^+\setminus \Omega_1$, then $\Omega_0$ will contain a nonempty open region and so is $\Omega_1\setminus\{\mathcal S_1\}$. Denote by $\partial\Omega_0$ the boundary of $\Omega_0$ which is located in $\mathbb R^2$ (excluding the part at the infinity).
It is well known that the boundary of $\Omega_0$ is formed by orbits of $H_f$.
Let $B_1^*$ be one of the regular orbits of $\partial\Omega_0$, and its endpoints be $I_1^*$ and $I_2^*$ (they may coincide), which are at the infinity. Hence $B_1^*$ must be a heteroclinic or a homoclinic orbit of $H_f$. Since the Hamiltonian vector fields $H_f$ and $H_g$ are transversal everywhere in the finite plane, it follows that under the action of the flow $\psi_s$,  $B_1^*$ will positively get into the interior of $\Omega_0$, and negatively moves into $\Omega_1$.
Now the flows $\psi_s$ and $\phi_t$ commute, it forces that the negative action of $B_1^*$ under the flow $\psi_s$ will be an orbit of $H_f$, choosing one of them, and denoting it by $O^*$. Then, as shown in the previous proof, $O^*$ will have its endpoints same as those of $B_1^*$. Since $B_1^*$ belongs to the boundary of $\Omega_1=\bigcup\limits_{s=0}\limits^\infty\psi_s(\mathcal S_1)$, the orbit $O^*$ of $H_f$ must be in the region $\Omega_1$. By the uniqueness theorem of solutions of a smooth differential system with respect to the initial point, it follows that $O^*=\psi_{s_0}(\mathcal S_1)$ for some $s_0\in (0,\infty)$. Hence the endpoints $I_1^*$ and $I_2^*$ of $B_1^*$ must coincide with $I_{11}$ and $I_{12}$. This means that $B_1^*$ is a heteroclinic orbit connecting $I_{11}$ and $I_{12}$ when $I_{11}\ne I_{12}$, or a homoclinic one  when $I_{11}= I_{12}$. 
Again by the transversality of $H_f$ and $H_g$ it follows that $B_1^*$ under the action of the flow $\psi_s$ will positively get into the interior of $\Omega_0$. It is in contraction with the definition of $\Omega_0$. Hence, the heteroclinic (or homoclinic) orbit $B_1^*$ for $H_f$ does not exist, and so $\Omega_0$ does not have a boundary in $\mathbb R^2$. Consequently, $\Omega_0$ is empty. Hence $\bigcup\limits_{s=0}\limits^\infty\psi_s(\mathcal S_1)=D_{\mathcal S_1}^+$, which is exactly what we want to prove.
 }

{Now we turn to $\mathcal S_1$ under the action of $\psi_s$ with $s<0$.} Applying the same arguments as above to $\mathcal S_1$ with $s\le 0$ yields that $\bigcup\limits_{s=0}\limits^{-\infty}\psi_s(\mathcal S_1)=D_{\mathcal S_1}^-$. Hence,
$\bigcup\limits_{s=-\infty}\limits^{\infty}\psi_s(\mathcal S_1)$ is the full finite plane $\mathbb R^2$. This proves the claim. \end{proof}

{
\begin{remark}\label{r0}
The last proofs 
verify the next facts.
\begin{itemize}
\item The finite plane $\mathbb R^2$ is foliated by $\{\psi_s(\mathcal S_1)|\ s\in\mathbb R\}$, each of which is a regular orbit of $H_f$.
\item All orbits of $H_f$ except the line at infinity are heteroclinic to the same singularities at the infinity, or homoclinic to the same singularity at the infinity. The same argument applies to $H_g$.
\end{itemize}
\end{remark}
}

\noindent{\bf Claim 6.} {\it  If $D(f,g)\in\mathbb R^*$, then $\mathcal S$ has a unique connected branch.}

In fact, by contrary we suppose that $\mathcal S$ has more than one connected branches. Let $\mathcal S_1$ and $\mathcal S_2$ be two different connected branches of $\mathcal S$, with $p_0\in\mathcal S_1$. By Claim 5 there exists an $s_0\in \mathbb R$ such that $\mathcal S_2=\psi_{s_0}(\mathcal S_1)$. Let $r_0\in\mathcal S_2$ be the image of $p_0$ under the diffeomorphism $\psi_{s_0}$. Then $f(p_0)=f(r_0)$ by the assumption of the contrary. In addition, $g(\psi_s(p_0))\equiv g(p_0)$ for all $s\in\mathbb R$ because $g$ and $\psi_s$ are respectively the first integral and the flow of $H_g$.

Since $f$ is a first integral of the Hamiltonian vector field $H_f$ and $\psi_s(\mathcal S_1)$ is an orbit of $H_f$ for any fixed value of $s$, it follows that $f(\psi_s(\mathcal S_1))$ is a constant for any fixed $s\in\mathbb R$.
Set
\[
\omega(s)=f(\psi_s(\mathcal S_1)),\quad s\in [0,\ s_0].
\]
Then $\omega(s)$ is well defined and is an analytic function, where we have used the facts that
 \begin{itemize}
\item $f$ is a polynomial.
\item $\psi_s$ is an analytic flow, because the Hamiltonian vector field $H_g$ is polynomial, and so is analytic.
\item $\mathcal S_1$ is an analytic curve, because it is a regular orbit of the polynomial vector field $H_f$.
\end{itemize}
By the definition of $\omega(s)$ one has $\omega(0)=\omega(s_0)$. It forces that there exists an $s^*\in (0,\ s_0)$ at which $\omega$ takes a maximum or a minimum value on $[0,s_0]$. Then $\omega'(s^*)=0$. On the other hand,
\[
\begin{split}
\omega'(s^*)&=\left.\frac{df(\psi_s(p_0))}{ds}\right |_{s=s^*}=\left.\nabla f(\psi_s(p_0))\frac{d\psi_s(p_0)}{ds}\right |_{s=s^*}\\
&=\left.\nabla f(\psi_s(p_0)) H_g(\psi_s(p_0))\right |_{s=s^*}=-(f_xg_y-f_yg_x)(\psi_{s^*}(p_0))\\
&=-D(f,g)(\psi_{s^*}(p_0))=-c\ne 0,
\end{split}
\]
a contradiction. Recall that $c\ne 0$ is the real number such that $D(f,g)=c$ on $\mathbb R^2$. This verifies that $\mathcal S$ cannot have two different connected branches. Consequently, the claim follows. \qed

Since $\mathcal S$ has a unique connected branch, the points $p_0,q_0\in\mathbb R^2$ with $p_0\ne q_0$ satisfying $f(p_0)=f(q_0)$ must be located on the same orbit, i.e. $\mathcal S$, of $H_f$.

\noindent{\bf Claim 7.} {\it  If $D(f,g)\in\mathbb R^*$ and $f(p_0)=f(q_0)$, then $g(p_0)\ne g(q_0)$.}

Indeed, by contrary we assume that $g(p_0)= g(q_0)$, and let this common value to be $\nu_0$. Define $\mathcal R:=\{r\in\mathbb R^2|, g(r)=v_0\}$. The similar proof to Claim 6 yields that $\mathcal R$ contains a unique connected branch, which is an orbit of $H_g$. This implies that the orbits $\mathcal S$ of $H_f$ and $\mathcal R$ of $H_g$ intersect at the two different points $p_0$ and $q_0$. But it is impossible, because the flows $\phi_t$ and $\psi_s$ of the vector fields $H_f$ and $H_g$ commute due to $[H_f,H_g]\equiv 0$, and so one of the flows, saying $\psi_s$, maps any orbit of $\phi_t$ to a different one, and vice versa. Another explanation on this impossibility is that there will exist at least one point in between $p_0$ and $q_0$ along $\mathcal S$ at which the Hamiltonian vector fields $H_f$ and $H_g$ are tangent, a contradiction to Claim 3. This proves the claim. \qed

Summarizing Claims 1 to 7 achieves that for any two points $p,q\in\mathbb R^2$ with $p\ne q$, one has $f(p)\ne f(q)$ or $g(p)\ne g(q)$. Consequently, the map $F=(f,g):\ \mathbb R^2\rightarrow \mathbb R^2$ is injective.

It completes the proof of Theorem \ref{tj}.
\end{proof}

As a consequence of the results obtained in the proof of Theorem \ref{tj}, one gets the geometry of the level curves of $f$ and $g$ for the planar real polynomial map $F=(f,g)$.

\begin{corollary}\label{cj}
Assume that $F=(f,g):\ \mathbb R^2\rightarrow \mathbb R^2$ is a polynomial map such that the Jacobian
$DF\in\mathbb R^*$, the following statements hold.
\begin{itemize}
\item[$(a)$] Each of the level curves of $f$ $($and also of $g$$)$ has a unique connected branch with its two endpoints at the infinity, and {the level curves $($which are one-dimensional smooth manifolds$)$ foliate the real plane $\mathbb R^2$.}
\item[$(b)$] Each of the level curves of $g$ intersects all the level curves of $f$ transversally, and vice versa.
\item[$(c)$] Each of the level curves of $f$ and $g$ is closed in $\mathbb RP^2$ with possibly a singularity at the infinity.
\end{itemize}
\end{corollary}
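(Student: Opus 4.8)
The plan is to derive all three statements directly from the dynamical dictionary established in the proof of Theorem \ref{tj}, translating properties of the orbits of the Hamiltonian vector fields $H_f$ and $H_g$ into properties of the level curves of $f$ and $g$. The starting observation is that, since $f$ is a first integral of $H_f$, every level set $\{f=c\}$ is a union of orbits of $H_f$, and symmetrically every level set $\{g=c\}$ is a union of orbits of $H_g$. Thus each statement about level curves can be restated as a statement about orbits, which the seven claims already control.

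For part $(a)$ I would first invoke Claim 2: because $D(f,g)\in\mathbb R^*$, the field $H_f$ has no finite singularities and no periodic orbits, so each of its orbits is regular and has both its $\alpha$- and $\omega$-limits at infinity. Hence each connected branch of a level curve of $f$ is a smooth one-dimensional manifold whose two endpoints (possibly coinciding) lie on the equator of the Poincar\'e disc. Uniqueness of the branch is the content of Claim 6: the monotonicity argument there, where $\omega(s)=f(\psi_s(\mathcal S_1))$ satisfies $\omega'(s)=-c\ne 0$, forbids a single level value from being attained on two distinct branches, since two branches would be related by some $\psi_{s_0}$ and $\omega$ would then have an interior extremum on $[0,s_0]$. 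The foliation statement is exactly Claim 5 together with Remark \ref{r0}: the family $\{\psi_s(\mathcal S_1):\ s\in\mathbb R\}$ sweeps out all of $\mathbb R^2$ and each member is a regular orbit of $H_f$, i.e. a level curve of $f$. Exchanging the roles of $f$ and $g$ (equivalently, of $H_f$ and $H_g$) handles the assertions for $g$.

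Part $(b)$ is immediate from Claim 3. At each point of their intersection a level curve of $f$ is tangent to $H_f$ and a level curve of $g$ is tangent to $H_g$, and $\det(H_f,H_g)=D(f,g)\ne 0$ throughout the finite plane, so the two families meet transversally wherever they cross. For part $(c)$ I would work in the Poincar\'e compactification: each level curve of $f$ is a regular orbit of $H_f$ whose two ends limit, as $t\to\pm\infty$, to singularities of the compactified field on the equator, which exist and are finite in number by Claims 2 and 4. Adjoining these two limit points produces a compact, hence closed, subset of $\mathbb{RP}^2$, and the only possible non-manifold point is such an endpoint at infinity, which is the ``singularity at infinity'' in the statement.

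The step I expect to require the most care is not a genuine obstacle but a point of phrasing: Claim 6 and Claim 5 were stated for the particular level value $u_0=f(p_0)=f(q_0)$ arising in the injectivity argument. I must therefore observe explicitly that their proofs use nothing special about $u_0$ and apply verbatim to \emph{every} attained level value of $f$, and of $g$. Once this is recorded, the corollary reduces to a bookkeeping assembly of Claims 2, 3, 4, 5 and 6 together with Remark \ref{r0}.
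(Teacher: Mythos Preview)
Your proposal is correct and, for parts $(a)$ and $(b)$, matches the paper's own proof, which simply declares that these follow ``directly from the results given in the proof of Theorem~\ref{tj}'' without spelling out the individual claims; your explicit bookkeeping of Claims~2, 3, 5, 6 and Remark~\ref{r0}, together with your observation that the proofs of Claims~5 and~6 are not tied to the particular level value $u_0$, is exactly the unpacking the paper leaves implicit.

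For part $(c)$ there is a genuine but minor difference in route. The paper's primary argument is algebro-geometric: it projectivizes $f-c$ to a homogeneous polynomial in $\mathbb RP^2$ and invokes the fact (Milnor, Wilson) that a nonsingular real projective variety is a compact one-dimensional manifold, whence the level curve is closed in $\mathbb RP^2$ with the only possible singularities at infinity. Your argument instead stays inside the dynamical picture: the orbit closure in the Poincar\'e disc adjoins the two $\omega/\alpha$-limit points on the equator (finite in number by Claim~4), and passing to the quotient $\mathbb RP^2$ gives a compact, hence closed, set. The paper in fact mentions this Poincar\'e-disc route as an alternative (``can also be obtained partially by identifying the diametrally opposite points''), so you have chosen the secondary path; it is a little more elementary in that it avoids the Milnor--Wilson input, while the paper's projectivization argument makes the phrase ``with possibly a singularity at the infinity'' more transparent, since that singularity is precisely where the gradient of the homogenized polynomial vanishes.
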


\begin{proof} Statements $(a)$ and $(b)$ follow directly from the results given in the proof of Theorem \ref{tj}. Statement $(c)$ is a consequence of Claim 2 and the facts that a nonsingular variety of a real homogeneous polynomial $h (x,y,z)$  in $\mathbb RP^2$ is a compact one dimensional manifold (see e.g. Milnor \cite{Mi65} and Wilson \cite{Wi78}), and that a polynomial in $\mathbb R^2$ can be projectivized in $\mathbb RP^2$. {Statement $(c)$ can also be obtained partially by identifying the diametrally opposite points on the boundary of the Poincar\'e disc, which produce $\mathbb RP^2$}.

We mention that as shown in the following Remark \ref{r2}, a nonsingular real planar curve in $\mathbb R^2$ could have a singularity at the infinity of $\mathbb RP^2$.
\end{proof}


\begin{example}\label{Ex1}
{Here we provide some other examples illustrating} the geometry of the level curves of $f$ and $g$, which satisfy the Jacobian conjecture in $\mathbb R^2$.
\begin{itemize}
\item[$(E_1)$] {The map $F_1=(f_1,g_1)=(y-(2x-y)^4, \ 2x-y):\mathbb R^2\rightarrow \mathbb R^2$ has its determinant equal to $-2$. The level curves of $f_1$ all have their endpoints at the infinity of the Poincar\'e disc in the direction $y=2 x$ with $x>0$, and are closed in the Poincar\'e compactification with a singularity at the infinity. Whereas the level curves of $g_1$ all have their endpoints at the infinities of the Poincar\'e disc in the two directions of the line $y=2x$, and are all nonsingular and closed in $\mathbb RP^2$. For a definition on singular point of a curve, see for example, \cite[page 31]{Fu08}.

The argument that the nonsingular level curve $f_1=c_1$ in $\mathbb R^2$ is singular at the infinity of $\mathbb RP^2$ follows from the facts that the projectivization of $f_1=c_1$ in the projective coordinate $[X:Y:Z]$ is  $f_1^*(X,Y,Z)=YZ^3-(2X-Y)^4-c_1Z^4=0$ with the gradient of $f_1^*$ vanishing at $[1:2:0]$, and its local expression at the infinity $[1:2:0]$ is $2Z^3-16 X^4- {c_1} Z^4+O(|X,Z|^5)=0$, which has two different branches at $[1:2:0]$ for $Z\ge 0$.

The argument that the level curve $g_1=d_1$ is nonsingular and closed in $\mathbb R^2P$ follows from the facts that the projectivization of $g_1=d_1$ in the projective coordinate $[X:Y:Z]$ is $g_1^*(X,Y,Z)=2X-Y-d_1Z=0$, which is nonsingular in $\mathbb RP^2$.
 }

\item[$(E_2)$] The map $F_2=(f_2,g_2)=(y-x^3, \ y-x-x^3):\mathbb R^2\rightarrow \mathbb R^2$ has its determinant equal to $1$. The level curves of $f_2$ and $g_2$ all have their endpoints at the infinity of the Poincar\'e disc with one in the positive $y$ direction and another in the negative $y$ direction, and they are all closed {and singular} in $\mathbb RP^2$.

\item[$(E_3)$] {The map $F_3=(f_3,g_3)=(y-x^2, \ y-x-x^2):\mathbb R^2\rightarrow \mathbb R^2$ has its determinant equal to $1$. The level curves of $f_3$ and $g_3$ all have their endpoints at the infinity of the Poincar\'e disc in the positive $y$ direction, and they are all closed and nonsingular in $\mathbb RP^2$. Here we have used the fact that any irreducible conic is nonsingular in $\mathbb RP^2$ \cite[page 55]{Fu08}, or that an irreducible conic is isomorphic to $\mathbb RP^1$  \cite[page 78]{Fu08}.}
\end{itemize}
{
These examples show that the level curves of the two polynomials in the polynomial maps $\mathbb R^2\longrightarrow \mathbb R^2$ satisfying the Jacobian conjecture can be either all nonsingular closed ones, or both singular closed ones, or nonsingular closed ones for one polynomial and singular closed one for another polynomial.
}
\end{example}

\begin{remark}\label{r1} {\rm As an application of the results from the proof of Theorem \ref{tj}, one gets that if $f=1+x-x^2y$ (see \cite{Pi94}), then for any polynomial $g(x,y)$, the map $F=(f,g):\ \mathbb R^2\rightarrow \mathbb R^2$ is not injective, because $f=c$ with $c\in\mathbb R$ has more than one distinct connected branches. In addition, one could compute via Mathematica that
a polynomial $g(x,y)$ of higher degree such that $[H_f,H_g]\equiv 0$ is generated by $f$, i.e. $g$ is a polynomial of $f$.
}\end{remark}

\begin{remark}\label{r2} {\rm After completing the proof of Theorem \ref{tj}, one wants to apply our tools to prove the Jacobian conjecture in $\mathbb C^2$. But the complicated topology of planar algebraic curves in $\mathbb C^2$ maybe prevent its direct application, because any polynomial $f(x,y)$ in $\mathbb C^2$ can be projectivized in $\mathbb CP^2$, and as it is well known that for any real homogeneous polynomial $F(x,y,z)$ of degree $m$, if $\mathcal V(F):=\{{[x:y:z]\in}\mathbb CP^2|\ F(x,y,z)=0\}$ is nonsingular in $\mathbb CP^2$, then it is a Riemann surface of genus $g=(m-1)(m-2)/2$. See e.g. Wilson \cite{Wi78}.

{In addition, as shown in Example \ref{Ex1} a nonsingular curve in $\mathbb C^2$ could have singularities at the infinity. This implies that in general, a nonsingular planar curve in $\mathbb C^2$ could have more complicated topology in $\mathbb CP^2$.}

}
\end{remark}

{
\begin{remark}\label{r3} {\rm Our proofs, especially those on Claim $5$, provide also some information of the Hamiltonian vector field $H_f$ (and of $H_g$) at the infinity. In this direction we should mention the next results, which were originally proved for $\mathbb C[x,y]$, but one can check that they also work for real polynomials.


By definition, a polynomial $f\in\mathbb R[x,y]$ has a \textit{Jacobian mate} if there exists a $g\in\mathbb R[x,y]$
such that $D(f,g)=1$.

\noindent {\bf Theorem A.} (Abhyankar \cite{Ab1977,Ab1979}). {\it The following two statements are equivalent.
\begin{itemize}
\item[$(a)$] Every polynomial $f\in\mathbb R[x,y]$ which has a Jacobian mate has a point
at infinity.
\item[$(b)$] The Jacobian conjecture for polynomials in $\mathbb R[x,y]$ holds.
\end{itemize}
}

Theorem A was also proved in \cite[Theorem 10.2.23]{Es00} for complex polynomials
in $\mathbb C[x,y]$ with its proof adapting for real polynomials of $\mathbb R[x,y]$.
By Theorem A it follows that the Jacobian conjecture only holds if the Hamiltonian
vector fields $H_f$ and $H_g$ have only one pair of singularities at the infinity.

Our proofs do not provide exact information on the number of pairs of singularities at the
infinity. But combining Remark \ref{r0} and Claim 5 and its proofs, it follows that all orbits of $H_f$ (resp. $H_g$) in $\mathbb R^2$ are heteroclinic to the same two singularities of $H_f$ (resp. $H_g$) at the infinity, or homoclinic to
the same singularity of $H_f$ (resp. $H_g$) at the infinity. Then applying Theorem A one has that the global topological phase portrait of $H_f$, and also of $H_g$ is one of the two pictures illustrated in Fig.~\ref{F1}.

\begin{figure}[ht]
\begin{center}
\includegraphics[width=10cm]{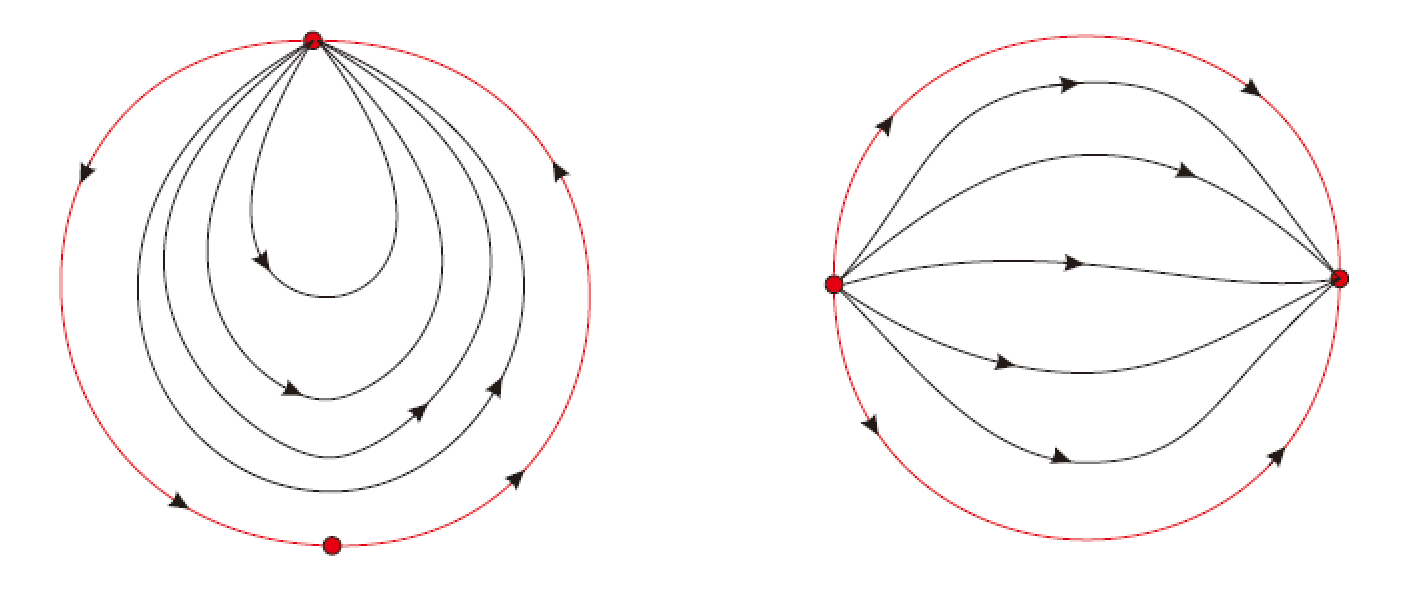}
\end{center}
\caption{Global phase portraits of $H_f$, and of $H_g$} \label{F1}
\end{figure}
}
\end{remark}
}


{
The next result has been used in the proof of Claim 5. It should be known, but we did not find a clear proof on it in some references. So we state it and prove it here for completeness.

\begin{proposition}\label{r5}
Let $f(x,y)$ be a real polynomial in $\mathbb R[x,y]$ of degree $m$, and let $f_m(x,y)$ be the highest order homogeneous part of $f$. Then the Hamiltonian vector field $H_f$ has its singularities at the infinity of the Poincar\'e disc in one to one correspondence with the real linear factors of $f_m(x,y)$.
\end{proposition}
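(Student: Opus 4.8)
The plan is to compute the infinite singularities of $H_f$ directly from the Poincar\'e compactification and then to collapse the resulting defining polynomial by means of Euler's identity for homogeneous functions. Write $H_f=(P,Q)^T$ with $P=-f_y$ and $Q=f_x$; then $H_f$ is a polynomial vector field of degree $m-1$ whose highest order homogeneous part is $(P_{m-1},Q_{m-1})=(-\partial_y f_m,\,\partial_x f_m)$.

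First I would recall from \cite[Chapter 5]{DLA06} the standard description of the singular points at infinity. In the chart $U_1$ with equatorial coordinate $w=y/x$, the singularities of the compactified field on the equator $\{v=0\}$ are exactly the points $w$ solving
\[
G(w):=Q_{m-1}(1,w)-w\,P_{m-1}(1,w)=0,
\]
whereas the one direction not visible in $U_1$, namely the two endpoints of the $y$-axis, must be treated separately in the chart $U_2$ with coordinate $w=x/y$.

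The key step is to simplify $G$. Substituting $P_{m-1}$ and $Q_{m-1}$ gives $G(w)=\partial_x f_m(1,w)+w\,\partial_y f_m(1,w)$, and since $f_m$ is homogeneous of degree $m$, Euler's identity $x\,\partial_x f_m+y\,\partial_y f_m=m f_m$ evaluated at $(x,y)=(1,w)$ yields
\[
G(w)=m\,f_m(1,w).
\]
Hence the finite-slope singularities of $H_f$ at infinity are in bijection with the real roots $w$ of $f_m(1,w)=0$, i.e.\ with the real linear factors $y-wx$ of $f_m$. Carrying out the same computation in $U_2$ reduces the corresponding defining polynomial, again by Euler's identity, to $-m f_m(w,1)$; evaluating at the origin $w=0$ of $U_2$ shows that the $y$-direction is a singularity if and only if $f_m(0,1)=0$, that is, if and only if $x\mid f_m$, which is precisely the linear factor $x$. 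Putting the two charts together establishes the claimed one-to-one correspondence, each real linear factor fixing one direction at infinity (a diametral pair of equator points, counted as a single point in the convention of Claim 4), while the non-real linear factors contribute no real points on the equator.

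The computation is routine once it is set up; the only genuine idea is the Euler-identity simplification that turns $G$ into a multiple of $f_m$ itself, after which the correspondence is immediate. The point that requires care is the bookkeeping between the two charts: one must check that the drop in degree of $f_m(1,w)$ occurring when $x\mid f_m$ is exactly accounted for by the vertical direction detected in $U_2$, and one must use the convention that a linear factor fixes a direction rather than an individual equator point, so that the multiplicities of the factors do not spuriously create extra singularities.
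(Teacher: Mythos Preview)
Your proposal is correct and follows essentially the same route as the paper: both compute the compactified field in the two standard charts, and both reduce the defining polynomial on the equator to $m\,f_m(1,w)$ (respectively $m\,f_m(w,1)$) via Euler's identity for homogeneous polynomials, then read off the correspondence with real linear factors. The only cosmetic difference is that the paper writes out the rescaled system $dz/d\tau=zp(y,z)$, $dy/d\tau=zq(y,z)+m f_m(1,y)$ explicitly before invoking Euler's formula, whereas you quote the formula $G(w)=Q_{m-1}(1,w)-w\,P_{m-1}(1,w)$ directly from \cite{DLA06}; the content is identical.
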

\begin{proof}
By the well known techniques on Poincar\'e compactification in the qualitative theory, see e.g. Cima and Llibre \cite{CL90}, Dumortier {\it et al} \cite{DLA06}, or Ye \cite{Ye86}, the Hamiltonian system
\[
\dot x=-H_y,\quad \dot y=H_x,
\]
in the local charts $[1:y:z]$ on the Poincar\'e disc has the expression
\[
\dot z=z^2f_y\left(\frac{1}{z},\frac yz\right), \quad
z\dot y=z^2f_x\left(\frac{1}{z},\frac yz\right)+yz^2 f_y\left(\frac{1}{z},\frac yz\right),
\]
where $f_x$ and $f_y$ are respectively the partial derivatives of $f(x,y)$ with respect to the first and second variables.
Taking the time rescaling $dt=z^{m-2}d\tau$, this last system can be further written as
\begin{equation}\label{efs}
\frac{dz}{d\tau}=zp(y,z),\quad
\frac{dz}{d\tau}=zq(y,z)+mf_m(1,y),
\end{equation}
where $p$ and $q$ are polynomials in $y$ and $z$, and we have used the fact that
\[
z^2\frac{\partial f_m}{\partial x}\left(\frac{1}{z},\frac yz\right)+yz^2\frac{\partial f_m}{\partial y}\left(\frac{1}{z},\frac yz\right)
=z^3mf_m\left(\frac{1}{z},\frac yz\right)=mz^{3-m}f_m(1,y),
\]
which can also be obtained from the Euler formula on homogenous polynomials, where as above $\frac{\partial f_m}{\partial x}$ and $\frac{\partial f_m}{\partial y}$ are the partial derivatives of $f_m(x,y)$ with respect to $x$ and $y$, respectively.

Equation \eqref{efs} shows exactly that the singularities at the infinity on the Poincar\'e disc in the local chart $[1:y:z]$ are in one to one correspondence with the real zeros of $f_m(1,y)$. Applying the similar arguments to the local chart $[x:1:z]$ on the Poincar\'e disc one gets that the singularities at the infinity on the Poincar\'e disc are in one to one correspondence with the real zeros of $f_m(x,1)$. Note that the real zeros of $f_m(x,1)$ are different from those of $f_m(1,y)$ are only $(0,1)$ and $(0,1)$ (if they exist).
This verifies that the singularities of $H_f$ at the infinity on the Poincar\'e disc are in one to one correspondingly  determined by the real linear factors in the form $ax+by$ of $f_m(x,y)$, whose number is at most $m$ taking into account multiplicity.

It completes the proof of the proposition.
\end{proof}
}

\begin{proposition}\label{r4}
In our setting, i.e. $D(f,g)=c$ a nonzero real number, the flows of the Hamiltonian vector fields $H_f$ and $H_g$ are both complete in $\mathbb R^2$.
\end{proposition}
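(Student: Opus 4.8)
\textbf{The plan} is to rule out finite--time escape of the trajectories, exploiting the fact that along the flow of each Hamiltonian field the companion polynomial grows at the constant rate $c$. I treat $H_f$; the field $H_g$ is symmetric. First I record the two basic identities along the flow $\phi_t$ of $H_f$, namely that $f$ is a first integral while $g$ increases linearly:
\[
\frac{d}{dt}\,f(\phi_t(p)) = 0,
\qquad
\frac{d}{dt}\,g(\phi_t(p)) = f_xg_y - f_yg_x = D(f,g) = c ,
\]
so that $f(\phi_t(p)) \equiv f(p) =: u_0$ and $g(\phi_t(p)) = g(p) + ct$ on the maximal existence interval (the second identity is the analogue of \eqref{eif1} with the roles of $f,g$ and $\phi,\psi$ exchanged). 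By the standard escape criterion for smooth vector fields on $\mathbb R^2$, the flow fails to be complete only if some orbit runs to infinity in finite time; so I assume, for a contradiction, that there is an orbit with maximal forward time $\beta < \infty$ and $|\phi_t(p)| \to \infty$ as $t \to \beta^-$.

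Since $g(\phi_t(p)) = g(p) + ct \to g(p) + c\beta$ stays bounded while $\phi_t(p) \to \infty$, the escaping orbit approaches the line at infinity along a \emph{singular direction} of $H_f$; by Claim 4 and Proposition \ref{r5} this direction is one of the finitely many real linear factors of the highest order form $f_m$. Reformulated: by Claim 6 each level set of $f$ is a single orbit on which $g$ is strictly monotone at rate $c$, so completeness of $\phi_t$ in $\mathbb R^2$ is \emph{equivalent} to $g$ being unbounded both above and below along each such orbit, and the finite--time escape hypothesis says precisely that $g$ stays bounded at one end. Passing to the Poincar\'e disc (Claim 2 and Proposition \ref{r5}), the orbit limits, as the disc time $\tau \to \infty$, to a singularity at infinity, and the real time is recovered from the disc time through the rescaling $dt = z^{\,m-2}\,d\tau$ used in the proof of Proposition \ref{r5}. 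Thus the task reduces to showing that $\int_0^{\infty} z^{\,m-2}\,d\tau$ diverges, i.e. that the approach to the point at infinity consumes infinite real time.

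\textbf{The main obstacle} is exactly this last estimate, and it is where the constant Jacobian must be used beyond the soft dynamical facts. Comparing top degrees in $D(f,g) = c$ gives $\mathrm{Jac}(f_m,g_n) \equiv 0$, whence $f_m = \mu_1 h^{a}$ and $g_n = \mu_2 h^{b}$ for a common real form $h$; so $H_f$ and $H_g$ share their singular directions at infinity, and the escaping orbit is tangent to a real root direction of $h$. Choosing coordinates so that this direction is the $y$--axis, I would estimate the transverse velocity $\dot x = -f_y$, together with $\dot y = f_x$, along the branch $\{f = u_0\}$ as it runs to infinity, and show that after the cancellation of the top order terms forced by $f \equiv u_0$ the surviving lower order part keeps these velocities small enough that the orbit needs infinite real time to reach infinity. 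That would make $g = g(p) + ct$ range over all of $\mathbb R$ on the orbit, contradicting the boundedness extracted in the previous paragraph. The delicate, decisive point is controlling this asymptotic rate so as to exclude a rapid (finite--time) escape: one must verify that the approach to the common singular direction is genuinely tangential — ultimately that the escape direction carries high enough multiplicity in $f_m$ — which is the step that truly invokes the Jacobian hypothesis rather than mere transversality. Granting this estimate, the contradiction with $\beta < \infty$ shows that $\phi_t$ is complete; the identical argument with $f$ and $g$ interchanged gives completeness of $\psi_s$, and the proposition follows.
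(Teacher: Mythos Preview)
Your proposal correctly isolates the central identity $\frac{d}{dt}g(\phi_t(p))=c$ and the right contradiction target (a finite escape time would force $g$ to stay bounded along the escaping orbit). But the proof has a genuine gap at exactly the point you flag as the ``main obstacle'': you never carry out the asymptotic estimate showing that $\int_0^\infty z^{m-2}\,d\tau$ diverges, nor do you verify that the multiplicity of the escape direction in $f_m$ is large enough to force a tangential (hence slow) approach. Saying ``granting this estimate'' is precisely conceding the heart of the argument. Without it there is no contradiction: a priori nothing prevents an orbit of $H_f$ from running to a singular point at infinity in finite real time while $g$ remains bounded along it, and your reformulation via Claim~6 is just a restatement of completeness, not a proof of it.

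The paper avoids this local analysis entirely by a global argument. Instead of studying one orbit's rate of approach to infinity, it takes the full orbit $\Upsilon_*$ of $H_g$ through $p_*$ and pushes it by $\phi_t$ for $t\in[0,\beta_{p_*})$. Using commutation of the flows and Lemma~\ref{lep} (all $\phi_t(\Upsilon_*)$ share the same endpoints at infinity), it shows that $\Omega_*=\bigcup_{t\in[0,\beta_{p_*})}\phi_t(\Upsilon_*)$ fills an entire half--plane bounded by $\Upsilon_*$. Integrating the same identity then gives $g(\phi_t(\Upsilon_*))=g(p_*)+ct$, so $g$ would be bounded on a half--plane, which is impossible for a nonconstant polynomial. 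This replaces your delicate pointwise escape--rate estimate by a robust statement about the range of a polynomial on an unbounded region, and is what makes the argument close.
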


\begin{proof}
 Indeed, as before let $\phi_t$ and  $\psi_s$ be the flows of the Hamiltonian vector fields $H_f$ and $H_g$, respectively. For any $p\in\mathbb R^2$, and let $I_p=(\alpha_p,\ \beta_p)$ be the maximal open interval of $\mathbb R$ on which the solution $\phi_t(p)$ is defined. Then $0\in I_p$ and it follows from $D(f(x,y),g(x,y))\equiv c$, $(x,y)\in\mathbb R^2$ that
\[
f_x(\phi_t(p))g_y(\phi_t(p))-f_y(\phi_t(p))g_x(\phi_t(p))=c, \quad \mbox{for all \ \ } t\in I_p.
\]
This last equality can be written as
\begin{equation}\label{er5}
\begin{split}
c&=\langle H_f(\phi_t(p)),\ \nabla g(\phi_t(p))\rangle\\
&=\left\langle\frac{d\phi_t(p)}{dt},\ \nabla g(\phi_t(p))\right\rangle
=\frac{dg(\phi_t(p))}{dt}, \qquad  t\in I_p,
\end{split}
\end{equation}
where $\langle\cdot, \cdot\rangle$ represents the inner product of two vectors.
This shows that along any orbit of $H_f$, the value of the polynomial $g$ monotonically increases if $c>0$, and decreases if $c<0$.

By contrary, if there exists a $p_*\in\mathbb R^2$ and the orbit $\phi_t(p_*)$ positively or negatively approaches infinity
in a finite time, without loss of generality we assume that $\beta_{p_*}$ is finite.  Let $\Gamma_*^+$ be the positive orbit of $H_f$ passing $p_*$, and let $\Upsilon_*$ be the orbit of $H_g$ passing $p_*$. {We conclude that
\begin{itemize}
\item $\Omega_*:=\bigcup\limits_{t=0}\limits^{\beta_{p_*}}\phi_t(\Upsilon_*)$ covers one of the half spaces limited by $\Upsilon_*$. 
\end{itemize}
This argument can be obtained in a similar technique as that in the proof of Claim 5. For completeness and readers' convenience we present it here.

Set $L_*:=\lim \limits_{t\rightarrow{\beta_{p_*}^-}}\phi_t(\Upsilon_*)$, a part of the boundary of $\Omega_*$. Note that $p_*\in\Upsilon_*$ and $p_\infty:=\lim \limits_{t\rightarrow{\beta_{p_*}^-}}\phi_t(p_*)\in L_*$ is at the infinity by the  assumption.
By contrary, if $L_*$ is not globally at the infinity, then $L_*$ contains points in the finite plane $\mathbb R^2$. Choose $q_*\in L_*\cap\mathbb R^2$ such that the regular orbit $L_{q_*}$ of $\psi_s$ passing $q_*$, a part of $L_*$, has one of its endpoints at $p_\infty$ (due to the contrary assumption). Then for any $t_0\in(0,\beta_{p_*})$, it follows from 
commutation of the flows $\phi_t$ and $\psi_s$ that $\phi_{-t_0}(L_{q_*})$ is a regular orbit of the flow $\psi_s$. And by Lemma \ref{lep} and its proof one gets that $\phi_{-t_0}(L_{q_*})$ has the same slope at the infinity as that of $L_{q_*}$, where in the application of Lemma \ref{lep} and its proof we have used the fact that a time rescaling does not change the topological structure  and their slopes at the infinity of the orbits of an autonomous differential system.
On the other hand, since $\phi_{-t_0}(q_*)$ is located in the interior of $\Omega_*$, by the uniqueness theorem of the solution of initial value problems we must have $\phi_{-t_0}(L_{q_*})$ coincides with $\phi_{t_*}(\Upsilon_*)$ for some $t_*\in (0,\beta_{p_*})$. But it is not possible, because
in one way $\phi_{t_*}(\Upsilon_*)$ is transversal with $\Gamma_*^+$ at $\phi_{t_*}(p_*)$ due to the transversality of $H_f$ and $H_g$ and commutation of $\phi_t$ and $\psi_s$, and
in another way $\phi_{-t_0}(L_{q_*})$ has the same slope at the infinity as that of $L_{q_*}$. This contradiction implies that
$\Omega_*:=\bigcup\limits_{t=0}\limits^{\beta_{p_*}}\phi_t(\Upsilon_*)$ covers one of the half spaces limited by $\Upsilon_*$.

We now apply this last conclusion to prove that $\beta_{p_*}=\infty$.} Integrating \eqref{er5} for $p$ replaced by $\Upsilon_*$ in $t$ from $0$ to $\beta_{p_*}$ gives
\[
g(\phi_{\beta_{p_*}}(\Upsilon_*))=g(\phi_0(\Upsilon_*))+c\beta_{p_*}=g(\Upsilon_*)+c\beta_{p_*}=g(p_*)+c\beta_{p_*}.
\]
This {together with the contrary assumption $\beta_{p_*}$ being finite} implies that $g$ is bounded in the half space $\Omega_*$. But it is impossible because $g$ is a nonconstant polynomial. This contradiction verifies that $\beta_{p_*}=\infty$. Consequently, the flow of $H_f$ is complete in $\mathbb R^2$.

It completes the proof of the proposition. 
\end{proof}

Finally we illustrate that for Strong Jacobian Conjecture, the flow of the Hamiltonian vector field $H_f$ or $H_g$ could be not complete in $\mathbb R^2$. Take $(f,g)=(-(1+x^2)y,\ x)$. Then $D(f,g)=1+x^2\ge 1$, and the Hamiltonian vector field associated to $f$ is
\[
H_f=\left(\begin{array}{c}
1+x^2\\ -2xy
\end{array}\right).
\]
It has the flow
\[
\phi_t(c_1,c_2)=\left(\tan(t+\arctan c_1)),\ (1+c_1^2){c_2}\cos(t+\arctan c_1)^2\right)
\]
satisfying $\phi_0(c_1,c_2)=(c_1,c_2)$, defined on $(\arctan c_1-\pi/2, \ \arctan c_1+\pi/2)$ for arbitrary  $(c_1,c_2)\in\mathbb R^2$.

\section*{Acknowledgments}

I sincerely appreciate Professor Jaume Llibre from Universitat Aut\`{o}noma de Barcelona SPAIN, who has read the first version of this paper, and provided several valuable suggestions and comments (for instance, subtle on the proof of Claim 5 and the information on Theorem $A$). { The author also thanks professors
Changjian Liu from Sun Yat-sen University CHINA,
David Marin from Universitat Autonoma de Barcelona SPAIN,
Cleto B. Miranda-Neto from Universidade Federal da Para\'{\i}ba BRAZIL,
Marco Sabatini from Universit\`{a} di Trento ITALY,
and
Wilberd van der Kallen from Universiteit Utrecht The NETHERLANDS,
who read the version arXiv:2011.12701, and provided some nice suggestions and comments.
Some special gratitude should be given to professor Jiangong You, who invited the author to talk this result in Chern Institute of Mathematics, Nankai University CHINA, where he and professor Yiming Long presented some good comments.
}

The author is partially supported by NNSF of China grant numbers 11671254, 11871334 and 12071284{, and by Innovation Program of Shanghai Municipal Education Commission grant number 2021-01-07-00-02-E00087}.


\end{document}